\documentclass[reqno]{article}
\usepackage{tikz}
\usepackage{amsmath,amsthm,amssymb,amsfonts}
\usepackage{mathrsfs}
\usepackage{graphicx}
\usepackage[left=1.5in, right=1.5in, top=1.2in, bottom=1.3in]{geometry}
\usepackage{hyperref}
\usepackage{enumitem}
\setlist{itemsep=-1pt, topsep=1pt}

\makeatletter
\newenvironment{bproof}[1][\proofname]{%
   \par\pushQED{\qed}\normalfont%
   \topsep6\p@\@plus6\p@\relax
   \trivlist\item[\hskip\labelsep\bfseries\itshape#1\@addpunct{.}]%
   \ignorespaces
}{%
   \popQED\endtrivlist\@endpefalse
}
\makeatother

\graphicspath{ {./Steiner/} }
\newtheorem{theorem}{Theorem}[section]
\newtheorem{lemma}[theorem]{Lemma}

\newtheorem{corollary}[theorem]{Corollary}
\newtheorem{conjecture}[theorem]{Conjecture}

\newtheorem{example}[theorem]{Example}
\newtheorem{problem}[theorem]{Problem}

\newtheorem{claim}[theorem]{Claim}

\def\beq{\begin{equation}}\def\eeq{\end{equation}}
\def\beqn{\begin{eqnarray}}\def\eeqn{\end{eqnarray}}

%\def\qed{\ifhmode\unskip\nobreak\fi\quad\ifmmode\Box\else$\Box$\fi}

%Version  June 27, 2007

\newcommand{\NN}{\mathbb{N}}

\newcommand{\comment}[1]{}

\newcommand{\SU}{\mathscr{U}}
\newcommand{\red}{\mathrm{red}}
\newcommand{\blue}{\mathrm{blue}}

\begin{document}
\title{Covering 2-colored complete digraphs by monochromatic $d$-dominating digraphs}

\author{Louis DeBiasio\thanks{Department of Mathematics, Miami University, \texttt{debiasld@miamioh.edu}. Research supported in
part by NSF grant DMS-1954170.}, Andr\'as Gy\'arf\'as\thanks{Alfr\'ed R\'enyi Institute of Mathematics, Budapest, P.O. Box 127,
Budapest, Hungary, H-1364. \texttt{gyarfas.andras@renyi.hu}. Research supported in part by NKFIH Grant No. K132696.}}

\maketitle

\begin{abstract}
A digraph is {\em $d$-dominating} if every set of at most $d$ vertices has a common out-neighbor.  For all integers $d\geq 2$, let $f(d)$ be the smallest integer such that the vertices of every 2-edge-colored (finite or infinite) complete digraph (including loops) can be covered by the vertices of at most $f(d)$ monochromatic $d$-dominating subgraphs.  Note that the existence of $f(d)$ is not obvious -- indeed, the question which motivated this paper was simply to determine whether $f(d)$ is bounded, even for $d=2$.  We answer this question affirmatively for all $d\geq 2$, proving $4\leq f(2)\le 8$ and $2d\leq f(d)\le 2d\left(\frac{d^{d}-1}{d-1}\right)\mbox{ for all } d\ge 3$.  We also give an example to show that there is no analogous bound for more than two colors.

Our result provides a positive answer to a question regarding an infinite analogue of the Burr-Erd\H{o}s conjecture on the Ramsey numbers of $d$-degenerate graphs.  Moreover, a special case of our result is related to properties of $d$-paradoxical tournaments.
\end{abstract}

\section{Introduction}

Throughout this note a \emph{directed graph} (or \emph{digraph} for short) is a pair $(V,E)$ where
$V$ can be finite or infinite and $E\subseteq V\times V$ (so in particular, loops are allowed).
A digraph is \emph{complete} if $E=V\times V$.  For $v\in V$, we write $N^+(v)=\{u: (v,u)\in E\}$ and $N^-(v)=\{u: (u,v)\in E\}$.  For a positive integer $k$, we define $[k]:=\{1,\dots,k\}$.
Note that regardless of whether $G=(V,E)$ is a graph or a digraph, if $H=(V', E')$ and $V'\subseteq V$ and $E'\subseteq E$, we will write $H\subseteq G$ and we will always refer to $H$ as a {\em subgraph} of $G$ rather than making a distinction between ``subgraph'' and ``subdigraph.''

Let $G=(V,E)$ be a digraph. For $X,Y\subseteq V$ we say that $X$ \emph{dominates} $Y$ if $(x,y)\in E$ for all $x\in X, y\in Y$.
We say that $G$ is \emph{$d$-dominating} if for all $S\subseteq V$ with $1\leq |S|\leq d$, $S$ dominates some $w\in V$. Note that it is possible for $w\in S$, in which case we must have $(w,w)\in E$.  Reversing all edges of a \emph{$d$-dominating} digraph gives a \emph{$d$-dominated} digraph. These notions are well studied for tournaments (see Section \ref{kpara}).

A {\em cover} of a digraph $G=(V,E)$ is a set of subgraphs $\{H_1, \dots, H_t\}$ such that $V(G)=\bigcup_{i\in [t]} V(H_i)$.  By a {\em $2$-coloring} of $G=(V,E)$, we will always mean a $2$-coloring of the edges of $G$; i.e. a function $c:E\to [2]$.
Given a $2$-coloring of $G$, we let $E_i$ be the set of edges receiving color $i$ (i.e. $E_i=c^{-1}(\{i\})$) and $G_i=(V,E_i)$ for $i\in [2]$.  A {\em cover of $G$ by monochromatic subgraphs} is a cover $\{H_1, \dots, H_t\}$ of $G$ such that for all $i\in [t]$ there exists $j\in [2]$ such that $H_i\subseteq G_j$.

The following problem was raised in \cite[Problem 6.6]{CDM}.

\begin{problem}\label{2dom} Given a $2$-colored complete digraph $K$, is it possible to cover $K$ with at
most four monochromatic $2$-dominating subgraphs? (If not four, some
other fixed number?)
\end{problem}

Our main result is a positive answer for the qualitative part of Problem \ref{2dom} in a more general form.

\begin{theorem}\label{dbounded} Let $d$ be an integer with $d\geq 2$.  In every $2$-colored complete
digraph $K$, there exists a cover of $K$ with at most $2\times \sum_{i=1}^d d^i=2d\left(\frac{d^{d}-1}{d-1}\right)$ monochromatic $d$-dominating subgraphs. In case of
$d=2$ there exists a cover of $K$ with at most eight monochromatic $2$-dominating subgraphs.
\end{theorem}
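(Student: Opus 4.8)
The plan is to reduce the problem to a statement about finite complete digraphs via a compactness argument, and then to prove the finite case by induction on some natural parameter, using a greedy "absorb the large out-degree vertices" strategy.

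\textbf{Step 1: Reduction to the finite case via compactness.} First I would argue that it suffices to prove the theorem for finite complete digraphs. The bound $f(d)$ is a fixed integer, so if every finite $2$-colored complete digraph admits a cover by at most $f(d)$ monochromatic $d$-dominating subgraphs, then by a standard compactness / König-type argument (or by using ultrafilters / the De Bruijn–Erdős theorem) the same holds for infinite complete digraphs. The one subtlety is that the property "$H$ is $d$-dominating" is not automatically inherited when passing to limits, because a vertex $w$ witnessing domination of a finite set $S$ must lie in the subgraph; but since $d$ is fixed and each $S$ has size at most $d$, one can encode, for each of the (at most $f(d)$) color classes and each $d$-subset, the choice of witness, and take a limit over a suitable ultrafilter so that all witnesses stabilize. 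I would spell this out carefully since it is the kind of step where an oversight is easy.

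\textbf{Step 2: The core finite argument — peeling off high out-degree vertices.} For the finite case, here is the strategy I expect to work. Given a $2$-colored complete digraph $K$ on vertex set $V$, call a vertex $v$ \emph{red-rich} if it has many red out-neighbors, and similarly blue-rich; every vertex is one or the other (or both). The key observation is: if a monochromatic (say red) subgraph $H$ has the property that every set of at most $d$ vertices of $H$ has a common red out-neighbor \emph{in $V$}, and moreover those witnesses can be chosen inside a bounded set, then adding that bounded set of witnesses to $H$ makes it genuinely $d$-dominating. So the real task is to cover $V$ by a bounded number of sets $A$ such that, within a single color, every $\le d$-subset of $A$ has a common out-neighbor (somewhere), and then to absorb a bounded number of witness vertices. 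I would try to find, in each color, a bounded "dominating-type" structure: e.g. iteratively pick a vertex $v_1$ of maximum red out-degree, let $A_1 = N^+_{\mathrm{red}}(v_1)$, then inside $V\setminus A_1$ pick $v_2$ of maximum red out-degree, and so on; because at each stage the non-covered part shrinks by a constant factor (this is where the $d^i$ factors and the geometric-series bound $\sum_{i=1}^d d^i$ come from), after roughly $d$ rounds one has covered a large fraction. The branching structure — each "level" potentially splitting into $d$ sub-instances according to which color dominates — accounts for the factor $d^i$ at level $i$, hence the total $2\sum_{i=1}^d d^i$. For $d=2$, a more careful bookkeeping (handling the two colors and at most two levels of recursion, plus a constant number of absorbing vertices) should bring the bound down to $8$ rather than $2(2+4)=12$.

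\textbf{Step 3: Making the pieces genuinely $d$-dominating.} Once $V$ is covered by boundedly many sets $A$, each of which lives "under" a single color in the sense that every $\le d$-subset of $A$ has a common monochromatic out-neighbor in $V$, I would add to each such $A$ a bounded collection of witness vertices: for each $j$-subset with $j\le d$ we need one witness, but we do not need all $\binom{|A|}{\le d}$ of them — instead I would use the structure of how $A$ was produced (it is a monochromatic out-neighborhood of some vertex $v$, or an intersection of such), so that a single vertex like $v$ itself, or a short chain of such vertices, already serves as a common out-neighbor for many subsets at once. Getting this "witness set" to have bounded size — ideally so that after absorbing it the piece is still contained in one color class and is $d$-dominating — is the main obstacle, and I expect the precise choice of the peeling procedure in Step 2 to be dictated by what makes Step 3 go through cleanly. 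The lower bounds $2d\le f(d)$ and $4\le f(2)$ I would handle separately with an explicit construction (e.g. a random or algebraically-defined tournament blown up appropriately), but these are not needed for the statement of Theorem \ref{dbounded} itself.
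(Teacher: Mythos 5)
Your proposal does not reach a proof; the central mechanism points the wrong way, and the other steps have real gaps. The decisive problem is in Step 2: you build your pieces as \emph{out}-neighborhoods, $A=N^{+}_{\red}(v)$ for a vertex $v$ of large red out-degree, but such a set has no reason to be $d$-dominating. The vertex $v$ sends red edges \emph{into} $A$, i.e.\ it is a common red in-neighbor of $A$, whereas $d$-domination asks every $S\subseteq A$ with $|S|\le d$ to have a common red \emph{out}-neighbor inside the piece. The sets that actually work are of the opposite form $W=\{v:(v,u)\ \text{is red}\}$ for a vertex $u$ carrying a red loop: then every $\le d$-subset of $W$ dominates $u\in W$, so $K[W]$ is spanned by a red $d$-dominating subgraph with no witnesses to absorb at all. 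This kills your Step 3 (which, as you concede, is circular: subsets containing the absorbed witnesses again need witnesses), but it also forces a different recursion, since the question becomes which vertices can serve as such sinks --- and this is where loops enter. The paper splits $V$ into $R$ and $B$ by loop color and, within $R$, inducts not on ``fraction covered'' (your greedy shrink-by-a-constant-factor step produces $\log|V|$ pieces, not a bounded number, and covering ``a large fraction'' never terminates) but on $\omega(G[R]_{\blue})$, the largest set of red-loop vertices pairwise joined by two-way blue edges. If no red $d$-dominating subgraph spans $K[R]$, a witness $d$-set $U=\{u_1,\dots,u_d\}$ with no common red out-neighbor yields $d$ trivially dominating pieces $W_i=\{v:(v,u_i)\ \text{red}\}$, and the leftover vertices fall into $d$ classes $T_i$ joined to $u_i$ by two-way blue edges, so that the clique number strictly drops in each; a separate argument handles $\omega\ge d+1$, where $d$ red pieces plus a single blue piece suffice. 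That recursion, $f(\omega,d)\le d(f(\omega-1,d)+1)$, is exactly what produces $\sum_{i=1}^{d}d^i$ per loop-color class; nothing in your branching heuristic actually generates it.

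Step 1 is also a gap rather than a routine reduction. ``$H$ is $d$-dominating'' is a $\forall\exists$ condition, so an ultrafilter limit of finite covers gives, for each $\le d$-subset $S$ of a limit piece, a witness that depends on the finite approximation; these witnesses need not stabilize to a single vertex lying in the limit piece, and your proposed fix (``encode the choice of witness \dots so that all witnesses stabilize'') is an assertion, not an argument. The paper sidesteps compactness entirely: the pieces $W_i$ above are defined globally on the (possibly infinite) digraph and are dominating by construction, so the finite and infinite cases are handled by one and the same argument.
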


For all integers $d\geq 1$, let $f(d)$ be the minimum number of
monochromatic $d$-dominating subgraphs needed to cover an arbitrarily 2-colored complete digraph.  Note that obviously $f(1)=2$ since the two sets of monochromatic loops provide an optimal cover. For $d\ge 2$, Theorem \ref{dbounded} shows that $f(d)$ is well-defined. Example \ref{exCIM} below (adapted from \cite[Proposition 6.3]{CDM}) combined with Theorem \ref{dbounded} gives
\begin{equation}\label{fbound}
4\leq f(2)\leq 8 ~\text{ and }~ 2d\leq f(d)\leq 2d\left(\frac{d^{d}-1}{d-1}\right) \text{ for all integers } d\geq 3.
\end{equation}

\begin{example}\label{exCIM}
Let $K$ be a complete digraph on at least $2d$ vertices and partition $V(K)$ into non-empty sets $R_1, \dots, R_d$
 and $B_1, \dots, B_d$, color all edges inside $R_i$ red, all edges inside $B_i$ blue, all edges from $R_i$ to $B_j$
 red, all edges from $B_i$ to $R_j$ blue, all edges between $R_i$ and $R_j$ with $i\neq j$ blue, and all edges between
 $B_i$ and $B_j$ with $i\neq j$ red.  One can check that every monochromatic $d$-dominating subgraph of $K$ is entirely
 contained inside one of the sets $R_1, \dots, R_d, B_1, \dots, B_d$.
\end{example}

Finally, the following example shows that for $d\ge 2$ there is no analogue of Theorem \ref{dbounded} for more than two colors  (c.f. \cite[Example 2.3]{CDM}).

\begin{example} Let $V$
be a totally ordered set and let $K$ be the complete digraph on $V$ where for all $i\in V$, $(i,i)$ is green and for all
$i,j\in V$ with $i<j$, $(i,j)$ is red and $(j,i)$ is blue. Note that for $d\ge 2$ the only monochromatic $d$-dominating subgraphs are the green loops and thus no bound can be put on the number of monochromatic $d$-dominating subgraphs needed to cover $V$.
\end{example}

\subsection{Motivation}

A graph $G$ is \emph{$d$-degenerate} if there is an ordering of the vertices $v_1, v_2, \dots$ such that for all
 $i\geq 1$, $|N(v_i)\cap \{v_1,\dots, v_{i-1}\}|\leq d$ (equivalently, every subgraph has a vertex of degree at
 most $d$). Burr and Erd\H{o}s conjectured \cite{BE} that for all positive integers $d$, there exists $c_d>0$ such
 that every 2-coloring of $K_n$ contains a monochromatic copy of every $d$-degenerate graph on at most $c_dn$ vertices.
  This conjecture was recently confirmed by Lee \cite{Lee}.

The motivation for Problem \ref{2dom} relates to the following conjecture also raised in \cite[Problem 1.5, Conjecture 10.2]{CDM}
which can be thought of as an infinite analogue of the Burr-Erd\H{o}s conjecture.

\begin{conjecture}\label{degen}
For all positive integers $d$, there exists a real number $c_d>0$ such that if $G$ is a countably infinite $d$-degenerate graph
with no finite dominating set, then in every $2$-coloring of the edges of $K_{\mathbb{N}}$, there exists a monochromatic copy of
$G$ with vertex set $V\subseteq \mathbb{N}$ such that the upper density of $V$ is at least $c_d$.
\end{conjecture}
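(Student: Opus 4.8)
The plan is to derive Conjecture \ref{degen} from the covering result of Theorem \ref{dbounded}, using the degeneracy of $G$ to drive a greedy embedding and the ``no finite dominating set'' hypothesis to control the density of the image. First I would reduce the undirected problem to the digraph setting: given a $2$-coloring of (undirected) $K_{\mathbb{N}}$, symmetrize it to a $2$-colored complete digraph by giving both $(i,j)$ and $(j,i)$ the color of the edge $\{i,j\}$ (the loops may be colored arbitrarily, as they play no role below). Applying Theorem \ref{dbounded} to this digraph covers $\mathbb{N}$ by at most $f(d)$ monochromatic $d$-dominating subgraphs; since finitely many sets cover $\mathbb{N}$, one of them, say a red piece on a vertex set $U$, satisfies $\overline{d}(U)\ge 1/f(d)=:c_d$. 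Because the coloring is symmetric, ``red-$d$-dominating'' on $U$ says exactly that every set of at most $d$ vertices of $U$ has a common \emph{red neighbor} in $U$ --- which is precisely the local condition one needs to extend a partial embedding of a $d$-degenerate graph one vertex at a time.

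Next I would set up the embedding. Fix a degeneracy ordering of $V(G)$, so that each vertex has at most $d$ neighbors preceding it, and build an injection $\phi:V(G)\to U$ mapping edges of $G$ to red edges by processing the vertices in this order: when a vertex $v$ is reached, its (at most $d$) already-embedded neighbors form a set $S\subseteq U$, and by $d$-domination $S$ has a common red neighbor in $U$, at which we place $v$. The role of the ``no finite dominating set'' hypothesis is twofold and enters exactly here. First, it guarantees that $G$ is genuinely spread out: at every finite stage the embedded set is finite, so $G$ contains vertices non-adjacent to all of them, giving the freedom to keep steering the image toward small, still-unused elements of $U$. Second --- and this is what I would use for density --- it rules out the graphs (such as $K_{d,\infty}$) whose embedding would force a single $d$-set to serve as the back-neighborhood of infinitely many vertices; any such graph has a finite dominating set, so under our hypothesis the ``witness demands'' made on any fixed $d$-subset of $U$ stay bounded.

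With the embedding in place, the final step is the density bookkeeping. Using the abundance of non-adjacent vertices supplied by the hypothesis, I would arrange the greedy choices so that the image $\phi(V(G))$ reclaims a positive proportion of every initial segment on which $U$ is dense; taking the limit superior along those segments then yields $\overline{d}(\phi(V(G)))\ge c_d$, with $c_d=1/f(d)$ explicit from Theorem \ref{dbounded}. Since $\phi$ is a red embedding of $G$, this produces a monochromatic copy of $G$ of upper density at least $c_d$, as required.

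The hard part --- and the reason Theorem \ref{dbounded} gives only ``a positive answer to a question regarding'' the conjecture rather than the conjecture itself --- is maintaining a \emph{sustained} supply of fresh common neighbors while simultaneously controlling density. Plain $d$-domination guarantees only \emph{one} common neighbor of a given $\le d$-set (and even permits that witness to be a looped member of the set itself), whereas the infinite embedding may call on the same set many times, and injectivity forbids reusing a vertex. I expect the crux to be upgrading the cover so that each piece is \emph{strongly} $d$-dominating --- every $\le d$-subset having infinitely many, ideally positive-density, common neighbors --- and then showing that the greedy routing can be organized to keep $\overline{d}(\phi(V(G)))$ bounded below by a constant. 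The ``no finite dominating set'' hypothesis is what should make this possible: it is exactly the obstruction that prevents the demand from concentrating on a bounded set of witnesses, and quantifying this trade-off is the main remaining difficulty.
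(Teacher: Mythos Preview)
This statement is a \emph{conjecture}, and the paper does not prove it; it only verifies it for a restricted class of $2$-colorings of $K_{\mathbb N}$ (those arising from a partition $\{X_1,X_2,\dots\}$ together with ultrafilters $\SU_i$ as described just after Conjecture~\ref{degen}). So there is no full proof to compare against, and you correctly flag in your final paragraph that your own argument is incomplete. What I can do is compare your strategy with the paper's partial argument and point out where the essential obstacle lies.

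Your plan applies Theorem~\ref{dbounded} directly to the symmetrized digraph on $\mathbb N$. The paper instead applies Theorem~\ref{dbounded} to the \emph{quotient} digraph on the blocks $X_i$, using $(d+1)$-dominating pieces (so the constant is $1/f(d+1)$, not $1/f(d)$). This is not a cosmetic difference: the ultrafilter structure is exactly what upgrades ``some common out-neighbor'' to ``an infinite (indeed $\SU_j$-large) set of common red neighbors in $X_j$'', which is then fed into \cite[Proposition~6.1]{CDM} to obtain a spanning red copy of $G$ on the chosen piece. Without that structural hypothesis the upgrade simply fails, and this is precisely the gap you identify at the end.

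Two further concrete issues with your approach as written. First, the claim that ``loops play no role'' is false for $d$-domination: the common out-neighbor of a $d$-set $S$ is allowed to be a vertex of $S$ via its loop, so a monochromatic $d$-dominating piece produced by Theorem~\ref{dbounded} need not have the property that every $d$-subset has a common red neighbour \emph{outside} itself. Arbitrary loop colours therefore can contaminate the cover; the paper avoids this by working at the level of blocks $X_i$, where ``common out-neighbor $X_j$'' translates to an infinite supply of genuine neighbours. Second, the ``no finite dominating set'' hypothesis does not, on its own, bound how many distinct $d$-sets occur as back-neighbourhoods, nor does it give any handle on how many fresh common neighbours each such set has inside a merely $d$-dominating piece; so the density bookkeeping you sketch cannot be carried out with only that hypothesis. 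In the paper's setting the hypothesis is used (via \cite[Proposition~6.1]{CDM}) once one already has the ``infinitely many common neighbours'' property, which is where the real work happens. In short, your proposal correctly locates the difficulty, but the step you label as ``the main remaining difficulty'' is the entire content of the open conjecture.
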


The case $d=1$ was solved completely in \cite{CDM} (regardless of whether $G$ has a finite dominating set or not). For certain 2-colorings of $K_{\NN}$, described below, Theorem \ref{dbounded} implies a positive solution to Conjecture \ref{degen} for $d\geq 2$.

Suppose that for some finite subset $F\subseteq \NN$, we have a partition of $\NN\setminus F$ into (finitely or infinitely
many) infinite sets $\mathcal{X}=\{X_1, \dots, X_n,\dots\}$.  Also suppose that we have ultrafilters
$\SU_1, \SU_2, \dots, \SU_n, \dots$ on $\NN$ such that for all $i\geq 1$, $X_i\in \SU_i$.  Finally, suppose that for all
$i, j\geq 1$ there exists $c_{i,j}\in [2]$ such that for all $v\in X_i$, $\{u: \{u,v\} \text{ has color } c_{i,j}\}\cap X_j\in \SU_j$.  This last condition ensures
that if there exists $X_{i_1}, \dots, X_{i_n}$ and $X_j$ such that $c_{i_1, j}=\dots=c_{i_n, j}=:c$, then every finite
collection of vertices in $X_{i_1}\cup \dots \cup X_{i_n}$ has infinitely many common neighbors of color $c$ in $X_j$.  Note that such a scenario can be realized as follows: For all $i,j$, let $c_{i,j}\in [2]$
and color the edges from $X_i$ to $X_j$ so that every vertex in $X_i$ is incident with cofinitely many edges of color
 $c_{i,j}$ (by using the half graph coloring\footnote{Given a totally ordered set $Z$ and disjoint $X,Y\subseteq Z$ the \emph{half graph coloring} of the complete bipartite graph
$K_{X,Y}$ is a 2-coloring of the edges of $K_{X,Y}$ where for all $i\in X$, $j\in Y$, $\{i,j\}$ is red if and only if  $i\leq j$.} when $c_{i,j}\neq c_{j,i}$ for instance).

The above coloring of $K_{\NN}$ naturally corresponds to a 2-colored complete digraph in the following way:
Let $K$ be a 2-colored complete digraph on $\mathcal{X}$ where we color $(X_i,X_j)$ with color $c$ if for all $v\in X_i$,
$\{u: \{u,v\} \text{ has color } c\}\cap X_j\in \SU_j$.  Now by Theorem \ref{dbounded}, $K$ can be covered by $t\leq f(d+1)$ monochromatic $(d+1)$-dominating subgraphs $G_1, \dots, G_t$.  Since $\NN\setminus F=\bigcup_{i\in [t]}\left(\bigcup_{X\in V(G_i)}X\right)$, there exists $i\in [t]$ such that $V_i:=\bigcup_{X\in V(G_i)}X$ has upper density at least $1/f(d+1)$.  Without loss of generality, suppose the edges of $G_i$ are red.  By the construction, $V_i$ has the property that
for all $S\subseteq V_i$ with $1\leq |S|\leq d+1$, there is an infinite subset $W\subseteq V_i$ such that every
edge in $E(S, W)$ is red.  As shown in \cite[Proposition 6.1]{CDM}, if $G$ is a graph satisfying the hypotheses of Conjecture \ref{degen}, then
there exists a red copy of $G$ which spans $V_i$ and thus has upper density at least $1/f(d+1)$.

\section{Covering digraphs, proof of Theorem \ref{dbounded}}

For a graph $G$, we denote the order of a largest clique (pairwise adjacent vertices) in $G$ by $\omega(G)$.
%and the chromatic number of $G$ by $\chi(G)$.
Given a 2-colored complete digraph $K$ and a set $U\subseteq V(K)$, define $G[U]_{\blue}$ to be the graph on $U$
where $\{u,v\}\in G[U]_{\blue}$ if and only if $(u,v)$ and $(v,u)$ are blue in $K$; define $G[U]_{\red}$ analogously.

Given positive integers $\omega$ and $d$, let $f(\omega, d)$ be the smallest positive integer $D$ such that if $K$
is a 2-colored complete digraph on vertex set $V$ where every loop has the same color, say red, and $\omega(G[V]_{\blue})= \omega$,
then $V$ can be covered by at most $D$ monochromatic $d$-dominating subgraphs.  Also define $f(0, d)=0$.

\begin{lemma}\label{L1}~
\begin{enumerate}
\item $f(1,2)=1$
\item $f(\omega, d)\leq d(f(\omega-1, d)+1)$ for all $1\leq \omega \leq d$ (in particular, $f(1,d)\leq d$).
In fact, all $d$-dominating subgraphs in the covering have the same color as the loops.
\end{enumerate}
\end{lemma}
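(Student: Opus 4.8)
For part (1): I would show $f(1,2)\ge 1$ trivially, and for the upper bound, suppose $K$ is a $2$-colored complete digraph on $V$ where every loop is red and $\omega(G[V]_{\blue})=1$, i.e.\ $G[V]_{\blue}$ has no edge. This means that for every pair $\{u,v\}$, at least one of $(u,v),(v,u)$ is red; combined with every loop being red, I claim the whole digraph $R=(V,E_{\red})$ is $2$-dominating. Indeed, given $S=\{x\}$, the loop $(x,x)$ witnesses domination; given $S=\{x,y\}$ with $x\ne y$, if both $(x,y)$ and $(y,x)$ were blue that would be a blue edge in $G[V]_{\blue}$, contradiction, so say $(x,y)$ is red — but I also need $(y,y)$ red (it is a loop, so yes) — hmm, I need a common out-neighbor of both $x$ and $y$. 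Take $w$; I need $(x,w),(y,w)$ both red. Try $w=x$: need $(y,x)$ red. Try $w=y$: need $(x,y)$ red. Since not both can be blue, one of these choices works, so $R$ is $2$-dominating and covers $V$ by itself; hence $f(1,2)=1$.

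For part (2): the plan is induction on $\omega$ via a single extraction step. Given $K$ on $V$ with all loops red and $\omega(G[V]_{\blue})=\omega\le d$, I would build one "core" vertex set greedily: pick $v_1,\dots,v_\omega$ forming... no — instead, the natural move is to find a set $A$ of at most $d$ vertices such that the red out-neighborhood structure lets me peel off one color class. Concretely, consider a maximum blue clique $Q=\{q_1,\dots,q_\omega\}$ in $G[V]_{\blue}$. For each $i\in[\omega]$, let $V_i$ be the set of vertices $u\in V$ such that $(u,q_i)$ is red (together with $q_i$ itself we ensure coverage of $Q$). By maximality of $Q$, every $u\in V\setminus Q$ fails to extend $Q$, so for each such $u$ there is some $q_i$ with $(u,q_i)$ or $(q_i,u)$ not blue; I would arrange the $V_i$ (and a bounded correction) so that $V=\bigcup_{i=1}^\omega V_i$. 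The key point: inside each $V_i\cup\{q_i\}$, since $q_i$ receives red from all of $V_i$, the induced blue clique number drops to at most $\omega-1$ (any blue clique in $V_i$ together with $q_i$ would be a blue clique of size one more, unless it already includes... here one must be slightly careful, but $(u,q_i)$ red means $u$ is not blue-adjacent to $q_i$, so no blue clique in $V_i$ is completed by $q_i$; since $Q$ was maximum this bounds $\omega(G[V_i]_{\blue})\le \omega-1$). Applying the inductive bound $f(\omega-1,d)$ inside each part and adding one $d$-dominating red subgraph per part to handle the vertices dominated "through $q_i$" (and the $q_i$'s themselves) gives $f(\omega,d)\le d\bigl(f(\omega-1,d)+1\bigr)$, with all new $d$-dominating subgraphs red — matching the "same color as the loops" claim, which propagates through the induction from the base case.

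The main obstacle I anticipate is making the covering step $V=\bigcup_i V_i$ actually work with exactly $\omega\le d$ parts and verifying that the "$+1$" red $d$-dominating subgraph per part genuinely $d$-dominates: one needs, for each part, a red subgraph in which every $\le d$-subset has a common red out-neighbor, and the construction must supply enough common red targets — this is where the hypothesis $\omega\le d$ and the half-graph/ultrafilter-free purely finite combinatorics interact most delicately, and where I'd expect to burn the most care (possibly needing to add $q_i$ with its red loop as a universal fallback out-neighbor, exploiting that any $\le d$ vertices all having red edges into a single $q_i$ makes $\{q_i\}$ a common out-neighbor). The rest — the recursion unrolling to $f(1,d)\le d$ and the bookkeeping that all subgraphs are red — is routine.
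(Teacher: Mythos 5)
Your part (1) is correct and is essentially the paper's argument. Part (2), however, has a genuine gap, and it is located exactly where you predicted you would ``burn the most care.'' The paper's key move, which your plan is missing, is to first assume that the red digraph on all of $V$ is \emph{not} itself $d$-dominating (otherwise one subgraph suffices), and to take as anchors a witness set $U=\{u_1,\dots,u_d\}$ having \emph{no common red out-neighbor}. Setting $W_i=\{v:(v,u_i)\text{ red}\}$ (each trivially red $d$-dominating via $u_i$, which carries a red loop), every leftover vertex $v\in V'=V\setminus\bigcup_i W_i$ satisfies $(v,u_i)$ blue for all $i$, and --- precisely because $U$ has no common red out-neighbor --- also $(u_i,v)$ blue for \emph{some} $i$. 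That is what produces, for each leftover $v$, a genuine two-way blue pair $\{v,u_i\}$, i.e.\ an edge of $G[V]_{\blue}$, so that each piece $T_i=\{v\in V':(u_i,v)\text{ blue}\}$ has every vertex blue-adjacent to $u_i\notin T_i$ and hence $\omega(G[T_i]_{\blue})\le\omega-1$.

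Your replacement of $U$ by a maximum blue clique $Q=\{q_1,\dots,q_\omega\}$ breaks both halves of this. First, the sets $V_i=\{u:(u,q_i)\text{ red}\}$ need not cover $V$: maximality of $Q$ only guarantees that for each $u$ \emph{some one} of $(u,q_i),(q_i,u)$ is red, and a vertex with $(u,q_i)$ blue for every $i$ lands in no $V_i$; the ``bounded correction'' you allude to has no obvious realization, since such a vertex has no designated red out-neighbor among the anchors. Second, your clique-number argument is inverted: you argue that vertices of $V_i$ are \emph{not} blue-adjacent to $q_i$, so no blue clique in $V_i$ is completed by $q_i$ --- but that yields no bound at all on $\omega(G[V_i]_{\blue})$, since a blue $\omega$-clique can sit entirely inside $V_i$ without involving $q_i$. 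To get the drop to $\omega-1$ you need the opposite: every vertex of the recursive piece must be blue-adjacent (in both directions) to an anchor outside the piece, so that any blue clique there extends to a strictly larger one in $V$. The non-$d$-domination witness $U$ is what supplies this, and it is the idea your plan lacks. (The arithmetic $d+d\cdot f(\omega-1,d)=d(f(\omega-1,d)+1)$ and the ``all red'' bookkeeping then go through as you describe.)
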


Note that the upper bound $\omega\leq d$ is not strictly necessary, but we include it here for clarity since in the next lemma, we will prove a stronger result when $\omega\geq d+1$.

\begin{proof}
Let $K$ be a 2-colored complete digraph on vertex set $V$ where all loops have the same color, say red.

(1) is trivial since for all distinct $u,v\in V$ both $(u,u)$ and $(v,v)$ are red and $\omega(G[V]_{\blue})=1$ implies that either $(u,v)$
or $(v,u)$ is red.

To see (2), note first that we may assume that $K$ itself is not spanned by a red $d$-dominating subgraph, otherwise we are done.
This is witnessed by a set $U=\{u_1,\dots,u_d\}
\subseteq V$, such that there is no $w\in V$ with $(u_i,w)$ red for
all $i\in [d]$.

For all $i\in [d]$ we define
$$W_i=\{v\in V: (v,u_i) \mbox{ is red}\}.$$
Note that $u_i\in W_i$ and $K[W_i]$ is spanned by a red $d$-dominating subgraph for all
$i\in [d]$.

Set $V'=V\setminus (\cup_{i\in [d]} W_i)$ and define
$$T_i=\{v\in V': (u_i,v) \mbox{ is blue}\}.$$
Note, that by the definition of $V'$, $(v,u_i)$ is also blue for
all $v\in T_i$ and $i\in [d]$.  Moreover, from the selection of
$U$, every vertex in $V'$ receives a blue edge from some vertex in
$U$ and therefore $V'=\cup_{i=1}^d T_i$.

Note that if $\omega=1$, then $T_i=\emptyset$ for all $i\in [d]$ and
thus $\cup_{i\in [d]} W_i$ is a cover of $K$ with $d$ red
$d$-dominating subgraphs; i.e. $f(1, d)\leq d=d(f(0,d)+1)$.

Otherwise, we have that $\omega(K[T_i]_{\blue})\leq \omega-1$ and thus $K$
is covered by at most $$d+d\cdot f(\omega-1,d)=d(f(\omega-1,d)+1)$$
red $d$-dominating subgraphs.
\end{proof}

\begin{lemma}\label{L2}
Let $K$ be a 2-colored complete digraph $K$ where $R$ is the set of red loops and $B$ is the set of blue loops.
If $\omega(G[R]_{\blue})\geq d+1$, then $V(K)$ can be covered by at most $d$ red $d$-dominating subgraphs and at most
one blue $d$-dominating subgraph.
Likewise, if $\omega(G[B]_{\red})\geq d+1$.  In particular, this implies $f(\omega, d)\leq d+1$ for $\omega\geq d+1$.
\end{lemma}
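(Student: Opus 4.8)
The plan is to exploit the large blue clique inside $R$ as a fixed "pivot" set that witnesses blue $d$-domination, and then handle the remaining vertices with a small number of red $d$-dominating subgraphs, mimicking the argument of Lemma \ref{L1}(2) but using only the blue clique once. Suppose $\omega(G[R]_{\blue}) \geq d+1$ and fix a set $Q = \{q_1, \dots, q_{d+1}\} \subseteq R$ with all pairs inside $Q$ blue in both directions (so in particular each $q_i$ has a blue loop... wait, $Q\subseteq R$ means red loops; but that is fine, we only need the blue edges between distinct vertices of $Q$). The key observation is that any $d$ of the $d+1$ vertices $q_1,\dots,q_{d+1}$ already have a common blue out-neighbor, namely the remaining $q_j$. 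So the obstruction to $K$ being spanned by a single blue $d$-dominating subgraph, if one exists, is witnessed by some set $S$ of size at most $d$ having no common blue out-neighbor at all; but then every vertex $v\in V(K)$ sends a red edge $(s,v)$ for some $s\in S$, which is exactly the setup to cover $V(K)$ by red sets.

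Concretely: if $V(K)$ itself is a blue $d$-dominating subgraph we are done with one blue graph and zero red graphs, so assume not. Then there is $S = \{s_1,\dots,s_k\}$ with $k\leq d$ such that no $w$ has $(s_i,w)$ blue for all $i\in[k]$. For each $i\in[k]$ set $W_i = \{v : (s_i,v) \text{ is red}\}$; since $(s_i,s_i)$ is the color of... hmm, we need $s_i\in W_i$ to make $K[W_i]$ red $d$-dominating, which requires $(s_i,s_i)$ red. This is where one must be slightly careful about where $S$ lives. I would instead choose $S$ more carefully — or observe that if $V(K)$ is not blue-$d$-dominating, the witnessing set $S$ can be taken with an extra property, or simply note $W_i$ is red $d$-dominating by a direct argument (every subset of $W_i$ of size $\le d$ has common red out-neighbor $s_i$ provided $(s_i,s_i)$ red; and if $(s_i,s_i)$ is blue, then $s_i\notin W_i$ but we can still argue $K[W_i\cup\{s_i\}]$ is red $d$-dominating if... no). The cleanest fix: because $|Q|=d+1>d\ge k=|S|$, there is a vertex $q\in Q\setminus S$, and then $Q\setminus\{q\}$ still has $d$ vertices with common blue out-neighbor $q$ — this shows we may always assume $S\cap Q=\emptyset$... actually we want to ensure the $W_i$ are red-$d$-dominating, so I'd argue: $\bigcup_{i\in[k]} W_i$ covers $V(K)$ since every vertex gets a red in-edge from some $s_i$, each $W_i$ is red $d$-dominating (common out-neighbor $s_i$, using that we may pick the witness $S$ to consist of vertices with red loops — if some $s_i$ has a blue loop, replace the analysis of that coordinate using $Q$), and this uses $k\le d$ red subgraphs plus the one blue subgraph $V(K)$ whenever $V(K)$ fails to be blue-$d$-dominating. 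If $V(K)$ succeeds, one blue subgraph suffices. Either way we get at most $d$ red plus at most one blue, hence $\le d+1$ total, giving $f(\omega,d)\le d+1$ for $\omega\ge d+1$.

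The main obstacle I anticipate is exactly the loop-color bookkeeping: verifying that each $W_i$ (or a slight enlargement of it) genuinely is $d$-dominating, which hinges on whether the witnessing vertices $s_i$ carry red loops, and reconciling this with the fact that $Q$ was chosen inside $R$ (red loops) while $S$ is chosen arbitrarily in $V(K)=R\cup B$. I expect the resolution is a short case split: vertices with red loops are handled as in Lemma \ref{L1}(2), and the surplus vertex of $Q$ (available since $|Q|=d+1$) absorbs the one bad coordinate, so at most $d$ red subgraphs are ever needed. The symmetric statement (swapping colors when $\omega(G[B]_{\red})\ge d+1$) follows by exchanging the roles of red and blue throughout, and the final inequality $f(\omega,d)\le d+1$ is immediate from the definition of $f(\omega,d)$ together with the bound just established.
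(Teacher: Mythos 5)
There is a genuine gap, and it is not the loop-color bookkeeping you flagged but the \emph{direction} of the edges in your decomposition. Your witness $S$ for the failure of blue $d$-domination gives, for every $w\in V(K)$, some $s_i\in S$ with $(s_i,w)$ red; hence $V(K)=\bigcup_i W_i$ with $W_i=\{v:(s_i,v)\text{ red}\}$. But $W_i$ is the red \emph{out}-neighborhood of $s_i$: every vertex of $W_i$ receives a red edge \emph{from} $s_i$, so $W_i$ is red-\emph{dominated} by $s_i$, not red $d$-dominating. For $s_i$ to be a common red out-neighbor of a subset $T\subseteq W_i$ you would need $(t,s_i)$ red for all $t\in T$, which is the reverse orientation and is not guaranteed. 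Reversing the definition to $W_i'=\{v:(v,s_i)\text{ red}\}$ fixes the domination but destroys the covering: a vertex $v$ with $(v,s_i)$ blue for all $i$ need not be a common blue out-neighbor of $S$, since that concerns the edges $(s_i,v)$. So the covering property and the domination property of your sets pull in opposite directions, and no choice of $S$ (red loops or not) reconciles them. This also means the blue clique $Q$ never actually enters your argument in a load-bearing way.

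The paper's proof avoids the dichotomy ``is $V(K)$ blue $d$-dominating or not'' entirely. It takes $X=\{x_1,\dots,x_{d+1}\}\subseteq R$ a blue clique and directly peels off $W_i=\{v:(v,x_i)\text{ red}\}$ for $i\in[d]$: each is red $d$-dominating with target $x_i$ (here the red loop at $x_i$ ensures $x_i\in W_i$), with no covering claim needed. Whatever remains, together with $X$, sends only blue edges into $\{x_1,\dots,x_d\}$, and any $S$ of size at most $d$ in this remainder misses some vertex of $X$ (this is where $|X|=d+1$ is used, to handle $S\subseteq X$), so the remainder is one blue $d$-dominating subgraph with its targets inside $X$. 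If you want to salvage your write-up, replace your witness-based red sets with these in-neighborhood sets anchored at the clique.
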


\begin{proof}
Suppose $\omega(G[R]_{\blue})\geq d+1$ and let $X=\{x_1, \dots, x_d, x_{d+1}\}\subseteq R$ be a set of order $d+1$ which witnesses
this fact. For
$i\in [d]$ we define
$$W_i=\{v\in V(K): (v,x_i) \mbox{ is red}\}.$$
Note that $x_i\in W_i$ and $K[W_i]$ is spanned by a red $d$-dominating subgraph
for all $i\in [d]$.

Set $V'= X\cup (V(K)\setminus (\cup_{i\in [d]} W_i))$ and note that
for all $v\in V'$, $[v, X]$ is blue.  Now let $S\subseteq V'$ such
that $1\leq |S|\leq d$.  If $S\subseteq X$, then since $|S|<|X|$,
there exists $x_i\in X\setminus S$ such that every edge in $[S,
x_i]$ is blue; otherwise $|S\cap X|\leq d-1$ and there exists $i\in
[d]$ such that $x_i\notin S$ and every edge in $[S, x_i]$ is blue.
So there is one blue $d$-dominating subgraph which covers $V'$, which together with the red $d$-dominating subgraphs
$K[W_1], \dots, K[W_d]$ gives the result.

When $\omega(G[B]_{\red})\geq d+1$, the proof is the same by switching the colors.
\end{proof}

Now we are ready to prove our main result.

\begin{bproof}[Proof of Theorem \ref{dbounded}] Let $V(K)=R\cup B$ where $R,B$ are the vertex sets of the red and blue loops, respectively.
If $\omega(G[R]_{\blue})\geq d+1$ or $\omega(G[B]_{\red})\geq d+1$, then by Lemma \ref{L2}, $R\cup B$ can be covered by at most
$d+1$ monochromatic
$d$-dominating subgraphs.  So suppose
$\omega(G[R]_{\blue})\leq d$ and $\omega(G[B]_{\red})\leq d$.  Now by Lemma \ref{L1}, each of $K[R]$ and $K[B]$ can be covered by at most
$4$ monochromatic $d$-dominating subgraphs when $d=2$, and by at most $\sum_{i=1}^\omega d^i\leq \sum_{i=1}^d d^i$ monochromatic
$d$-dominating subgraphs when $d\geq 3$.
\end{bproof}

\section{Paradoxical tournaments}\label{kpara}

In the above section, we proved that $f(1,2)=1$ and $f(1,d)\leq d$ for all $d\geq 3$.  Naturally, we wondered if the
upper bound on $f(1,d)$ could be improved when $d\geq 3$ (since any improvement on $f(1,d)$ would improve the general upper bound on $f(d)$).  In this section we show that it cannot; that is, $f(1,d)=d$ for all $d\geq 3$.

A {\em tournament} is a digraph $(V,E)$ such that for all distinct
$x,y\in V$ exactly one of $(x,y),(y,x)$ is in $E$ and $(x,x)\notin
E$.  Given a digraph $G=(V,E)$, we say that $S\subseteq V$ is an out-dominating set if for all $v\in V\setminus S$, there exists $u\in S$ such that $(u,v)\in E$, and we say that $S$ is an in-dominating set if for all $v\in V\setminus S$, there exists $u\in S$ such that $(v,u)\in E$.  Note that a tournament $T$ is $d$-dominating ($d$-dominated) if and only if $T$ has no in-dominating (out-dominating) set of order $d$.

We call a $d$-dominating ($d$-dominated) tournament {\em critical} if its proper
subtournaments are not $d$-dominating ($d$-dominated). For a tournament $T$, let $T^*$ be the digraph obtained from $T$ by adding a loop at every vertex.

Our main result of this section is the following.

\begin{theorem}\label{critpara} For all integers $d\geq 2$, if $T$ is a critical $d$-dominated tournament with no $(d+1)$-dominating subtournaments, then $f(1,d+1)=d+1$.
\end{theorem}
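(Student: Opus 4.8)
The plan is as follows. Since $f(1,d+1)\le d+1$ by Lemma~\ref{L1}(2) (with $\omega=1$), everything lies in the lower bound $f(1,d+1)\ge d+1$, and I would establish it with one explicit coloring. Put $V=V(T)$ and let $K$ be the $2$-colored complete digraph on $V$ in which every loop is red, every arc $(u,v)$ with $u\ne v$ and $(u,v)\in E(T)$ is blue, and every arc $(u,v)$ with $u\ne v$ and $(v,u)\in E(T)$ is red; thus the blue digraph of $K$ is exactly the tournament $T$ (whence $\omega(G[V]_{\blue})=1$), all loops are red, and the red digraph of $K$ is exactly $T^{*}$ with all of $T$'s arcs reversed. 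So $f(1,d+1)$ is the right parameter, and it suffices to show that no $d$ monochromatic $(d+1)$-dominating subgraphs cover $K$.

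First I would show that $K$ has no blue $(d+1)$-dominating subgraph. If $H$ were one, with vertex set $W$, then $H$ is loopless (a subgraph of the loopless blue digraph $T$), so the set $S=W$ forces $|W|\ge d+2$; and as $H\subseteq T[W]$, enlarging the arc set to all of $T[W]$ and extending sets of size $<d+1$, the subtournament $T[W]$ would be a $(d+1)$-dominating subtournament of $T$, contrary to hypothesis. Hence every cover of $K$ by monochromatic $(d+1)$-dominating subgraphs consists of red subgraphs only, and the heart of the matter is the following structural claim.

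\emph{Key Lemma: if $W\subseteq V$ and the red subgraph of $K$ on vertex set $W$ is $(d+1)$-dominating, then $T[W]$ has a source, i.e.\ a vertex of $W$ that beats every other vertex of $W$.} Granting this, the theorem follows: if $H_1,\dots,H_t$ are red $(d+1)$-dominating subgraphs covering $K$, then, since each $H_i$ sits inside the red subgraph of $K$ on $V(H_i)$ and adding arcs preserves $(d+1)$-domination, the Key Lemma applies with $W=V(H_i)$, giving a source $v_i$ of $T[V(H_i)]$ with $V(H_i)\subseteq\{v_i\}\cup N^{+}_{T}(v_i)$. As $\bigcup_i V(H_i)=V$, the set $\{v_1,\dots,v_t\}$ is an out-dominating set of $T$; but $T$ is $d$-dominated, so by the equivalence quoted above it has no out-dominating set of order $d$, hence none of order $\le d$ (pad with arbitrary vertices), and therefore $t\ge d+1$. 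Together with the upper bound this yields $f(1,d+1)=d+1$.

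To prove the Key Lemma --- and here the hypothesis $d\ge2$ is used --- I would first unwind the definition: the red subgraph of $K$ on $W$ is $(d+1)$-dominating exactly when for every $S\subseteq W$ with $1\le|S|\le d+1$ there is $w\in W$ beating (in $T$) every vertex of $S\setminus\{w\}$, which says ``$T[S]$ has a source'' when $w\in S$ and ``$w$ dominates $S$'' when $w\notin S$. I would induct on $|W|$; $|W|\le d+1$ is immediate by taking $S=W$. For $|W|\ge d+2$, assume $T[W]$ has no source and let $M$ be the top strongly connected component of $T[W]$, so every vertex of $M$ beats every vertex of $W\setminus M$ and $T[M]$ is strongly connected, hence source-free with $|M|\ge3$. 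Since any vertex outside $M$ loses to all of $M$, for a set $S\subseteq M$ a valid $w$ may always be chosen inside $M$, so the red subgraph of $K$ on $M$ is $(d+1)$-dominating; if $M\subsetneq W$ the induction hypothesis yields a source of $T[M]$, a contradiction, so $M=W$ and $T[W]$ is strongly connected. The remaining case, $T[W]$ strongly connected and source-free with $|W|\ge d+2$, is the main obstacle. Here the plan is to use that $T[W]$ is vertex-pancyclic (Moon's theorem), so every directed $(d+1)$-cycle in $T[W]$ is source-free and therefore has a dominator in $W$, and then to upgrade the ``source of $T[S]$'' alternative in the condition to a genuine common dominator of $S$ by replacing, inside a $(d+1)$-set $S$, the source of $T[S]$ with an in-neighbor of it (which exists by strong connectivity) and re-applying the condition. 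This would show that every $(d+1)$-subset of $W$ has a common dominator in $W$, i.e.\ $T[W]$ is a $(d+1)$-dominated subtournament of $T$ --- but a critical $d$-dominated tournament has none, since if $T[W]$ were $(d+1)$-dominated then deleting any single vertex of $T$ would leave a $d$-dominated tournament, contradicting criticality. That contradiction completes the proof.
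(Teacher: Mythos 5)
Your overall architecture is the same as the paper's: the same coloring (blue digraph equal to $T$, red digraph equal to the reversal of $T$ plus all loops), the same observation that the hypothesis on $T$ kills all blue $(d+1)$-dominating subgraphs, the same key structural claim (your ``Key Lemma'' is exactly, after reversing edges, the paper's Claim \ref{cl_triv} that every $(d+1)$-dominating piece is \emph{trivially} $(d+1)$-dominating, i.e.\ has a vertex beaten by all others), and the same endgame (the $t\le d$ sources would form an out-dominating set of order at most $d$, contradicting $d$-dominatedness). The upper bound via Lemma \ref{L1} and the reduction of the Key Lemma to the case where $T[W]$ is strongly connected and source-free are both fine. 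The problem is that in precisely that remaining case --- which you yourself call ``the main obstacle'' --- you give only a plan, not a proof: Moon's theorem supplies a common dominator for every $(d+1)$-set that happens to span a directed cycle, but the ``upgrade'' from ``every $S$ either has a dominator or $T[S]$ has a source'' to ``every $(d+1)$-set has a dominator'' is exactly the content of the claim, and the proposed device (swap the source of $T[S]$ for one of its in-neighbors and ``re-apply the condition'') has no termination or progress argument; the new set can again fall into the source alternative, and nothing in the sketch rules out cycling forever. Since the entire lower bound rests on this step, this is a genuine gap.

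The missing step closes quickly if you use criticality \emph{inside} the Key Lemma rather than only at the very end. If $W\subsetneq V$ and $|W|\ge d+1$, then because $T$ is critical $d$-dominated the subtournament $T[W]$ is not $d$-dominated, so some $S'\subseteq W$ with $|S'|=d$ has no common dominator in $W$. Now feed $S'\cup\{u\}$, for an arbitrary $u\in W\setminus S'$, into the red $(d+1)$-domination hypothesis: the red-dominated vertex $w$ cannot lie outside $S'$ (it would dominate $S'$ in $T$) and cannot be $u$ (same reason), so $w\in S'$ beats all of $(S'\cup\{u\})\setminus\{w\}$; since two distinct elements of $S'$ cannot each beat the other, the same $w=w_1$ is forced for every $u$, and $w_1$ is a source of $T[W]$. (The case $W=V$ needs a separate sentence, as in Lemma \ref{critpara2}, or the observation that a $d$-dominated tournament with $d\ge2$ has no source.) This is the paper's argument in Lemma \ref{tlemma}; it makes the strong-connectivity/pancyclicity machinery unnecessary. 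Separately, your closing assertion that a critical $d$-dominated tournament has no $(d+1)$-dominated subtournament is true but your justification only covers $W=V$; for proper $W$ the point is simply that $(d+1)$-dominated implies $d$-dominated, which already contradicts criticality.
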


However, before proving Theorem \ref{critpara}, we show that such a tournament exists for all $d\geq 2$ from which we obtain the following corollary.

\begin{corollary}
For all $d\geq 3$, $f(1,d)=d$.
\end{corollary}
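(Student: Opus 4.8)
The plan is to derive the lower bound from Theorem~\ref{critpara}; the upper bound $f(1,d)\le d$ for every $d\ge 3$ is already contained in Lemma~\ref{L1}(2). Applying Theorem~\ref{critpara} with its parameter ``$d$'' replaced by $d-1$ (legitimate since $d-1\ge 2$) shows that $f(1,d)=d$ as soon as one exhibits a \emph{critical $(d-1)$-dominated tournament having no $d$-dominating subtournament}. So the entire task reduces to constructing, for each $d\ge 3$, such a tournament, and the natural candidate is a tournament of smallest possible order among all $(d-1)$-dominated tournaments.

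To set this up, for a positive integer $k$ let $n_0(k)$ be the least order of a $k$-dominating tournament; since reversing every arc of a tournament yields a tournament and interchanges ``$k$-dominating'' with ``$k$-dominated'', $n_0(k)$ is also the least order of a $k$-dominated tournament. First I would note that $n_0(k)<\infty$: in a uniformly random tournament on $n$ vertices a fixed $k$-set has no common out-neighbour with probability $(1-2^{-k})^{n-k}$, and a union bound over all $\binom{n}{k}$ such sets tends to $0$ as $n\to\infty$, so sufficiently large random tournaments are $k$-dominating.

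The key step is the strict inequality $n_0(k)<n_0(k+1)$ for every $k\ge 1$. Take a $(k+1)$-dominating tournament $T'$ of order $n_0(k+1)$ — here $n_0(k+1)\ge k+2$ — and fix any vertex $v$ of $T'$. For $S\subseteq V(T')\setminus\{v\}$ with $1\le|S|\le k$, the set $S\cup\{v\}$ has at most $k+1$ elements, so it has a common out-neighbour $w$ in $T'$; as a tournament has no loops, $w\ne v$, so $w\in V(T')\setminus\{v\}$ and $w$ is a common out-neighbour of $S$ in $T'-v$. Hence $T'-v$ is $k$-dominating, giving $n_0(k)\le n_0(k+1)-1$.

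Finally, let $T$ be a $(d-1)$-dominated tournament of order $n_0(d-1)$. Any proper subtournament of $T$ has fewer than $n_0(d-1)$ vertices, hence is not $(d-1)$-dominated, so $T$ is critical $(d-1)$-dominated; and any subtournament of $T$ has at most $n_0(d-1)<n_0(d)$ vertices, hence is not $d$-dominating. Thus $T$ satisfies the hypotheses of Theorem~\ref{critpara} (with parameter $d-1$), so $f(1,d)=d$ for all $d\ge 3$. The one point requiring a little care is the strict inequality $n_0(k)<n_0(k+1)$, i.e.\ that removing a single vertex from a minimum-order $(k+1)$-dominating tournament leaves it $k$-dominating; the ``absorb the deleted vertex into the dominated set'' argument above settles it in a line, and the rest is bookkeeping once Theorem~\ref{critpara} is granted.
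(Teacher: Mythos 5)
Your proof is correct, but it reaches the key existence statement by a genuinely different route than the paper. The paper supplies the hypothesis of Theorem~\ref{critpara} via Bukh's Example~\ref{pparadox}: an explicit oriented power of a cycle, completed by random orientations, yields a perfectly $d$-paradoxical tournament, from which a minimal $d$-dominated subtournament is extracted. You instead take a $(d-1)$-dominated tournament of minimum possible order $n_0(d-1)$ and observe that criticality is automatic by minimality, while the absence of $d$-dominating subtournaments follows from the strict monotonicity $n_0(d-1)<n_0(d)$ of the Sch\"utte-type extremal function, which you prove by the clean ``absorb the deleted vertex into the dominated set'' argument (delete any vertex $v$ from a minimum $d$-dominating tournament; every set $S$ of size at most $d-1$ in the remainder inherits a common out-neighbour from $S\cup\{v\}$, and that out-neighbour cannot be $v$ or lie in $S$ since tournaments are loopless). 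I checked this step and it is sound. Your approach is more elementary: it needs only Erd\H{o}s's classical probabilistic existence of $k$-dominated tournaments, whereas the paper's route requires the stronger structural control of Bukh's construction (no $(d+1)$-dominating \emph{and} no $(d+1)$-dominated subtournaments), which buys the perfectly paradoxical tournaments that are of independent interest in Section~\ref{kpara} but is more than the corollary requires. The only point to make explicit is the standing convention that tournaments (and hence the minimum defining $n_0$) are nonempty, so that the vacuous degenerate cases do not arise; with that understood, your argument is complete.
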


Note that the absence of loops and two-way oriented edges make
the existence of $d$-dominated tournaments a nontrivial problem. This existence problem for $d$-dominated
tournaments was proposed by Sch\"utte (see \cite{E}) and was first
proved by Erd\H os \cite{E} with the probabilistic method, then
Graham and Spencer \cite{GS} gave an explicit construction using sufficiently large Paley
tournaments\footnote{For a prime power $p$, $p\equiv -1 \pmod 4$, the Paley
tournament $QT_p$ is defined on vertex set $V=[0,p-1]$ and $(a,b)$
is a directed edge if and only if $a-b$ is a non-zero square in the finite field $\mathbb{F}_p$.}.

Note that Babai
\cite{B} coined the term {\em $d$-paradoxical tournament} for what we refer to as $d$-dominated tournament.  In this spirit, we say that a tournament is {\em perfectly $d$-paradoxical} if it is $d$-dominating, $d$-dominated, has no $(d+1)$-dominating subtournaments, and has no $(d+1)$-dominated subtournaments.  A result of Esther and George Szekeres \cite{KSZ} combined with the fact that Paley tournaments are self-complementary implies that $QT_7$ is perfectly $2$-paradoxical and $QT_{19}$ is perfectly $3$-paradoxical. It is an open question (which to the best of our knowledge we are raising here for the first time) whether every Paley tournament is perfectly $d$-paradoxical for some $d$. While we can't settle that question, the following beautiful example of Bukh \cite{Bu} shows that perfectly $d$-paradoxical tournaments exist for all $d\geq 2$.  We repeat his proof here (tailored to the terminology of this paper) for completeness.

\begin{example}[Bukh \cite{Bu}]\label{pparadox}
For all integers $d\geq 2$, there exists a perfectly $d$-paradoxical tournament.  In particular, there exists a critical $d$-dominated tournament which has no $(d+1)$-dominating subtournaments.
\end{example}

\begin{proof}
Let $d$ be an integer with $d\geq 2$ and let $n=m(d+1)$ where $m=2^{3d}$.  Let $V=\{0, 1, \dots, n-1\}$ and let $G$ be the oriented graph on $V$ where $(i,j)\in E(G)$ if and only if $1\leq j-i\leq m-1$ (with addition modulo $n$).  In other words $G$ is the oriented $(m-1)$st power of a cycle on $n$ vertices.  Now we define a tournament $T$ by starting with the oriented graph $G$ and for all distinct $i,j\in V$, if $(i,j), (j,i)\not\in E(G)$, then independently and uniformly at random let $(i,j)\in E(T)$ or $(j,i)\in E(T)$.

First note that every induced subgraph of $G$ has an in-dominating set of order at most $d+1$ and an out-dominating set of order at most $d+1$ and thus the same is true of every subtournament of $T$.  This implies that $T$ has no $(d+1)$-dominating subtournaments and no $(d+1)$-dominated subtournaments.

Now we claim that with positive probability, $T$ has no out-dominating sets of order $d$ and no in-dominating sets of order $d$ and thus $T$ is $d$-dominated and $d$-dominating.  Let $S\subseteq V$ with $|S|=d$ and let $$N^+_G[S]=\{v\in V: v\in S \text{ or there exists } u\in S \text{ such that } (u,v)\in E(G)\}.$$  Let $V':=V\setminus N^+_G[S]$ and note that $|N^+_G[S]|\leq dm$ and thus $|V'|\geq m$.  The probability that $v\in V'$ is dominated by $S$ in $T$ is $1-2^{-d}$ and thus the probability that every vertex in $V'$ is dominated by $S$ is $(1-2^{-d})^{|V'|}\leq (1-2^{-d})^{m}\leq e^{-2^{-d}m}=e^{-4^d}$.  Likewise for every vertex of $V'$ dominating $S$.  So the expected number of out-dominating or in-dominating sets of order $d$ is at most
$$2\binom{n}{d}e^{-4^d}<2(em)^de^{-4d}<2(e^{3d+1})^de^{-4^d}<1$$
(where the last inequality holds since $(3d+1)d<4^d$ for all $d\geq 2$), which establishes the claim.

Starting with a perfectly $d$-paradoxical tournament $T$, let  $T'$ be a minimal subtournament of $T$ which is $d$-dominated.  So $T'$ is critical $d$-dominated and has no $(d+1)$-dominating subtournaments.
\end{proof}

The proof of Theorem \ref{critpara} will follow from two more general lemmas.

\begin{lemma}\label{critpara2} Let $T$ be a tournament and let $d\geq 2$.  If $T$ is 2-dominating and there exists a set $W\subseteq V(T)$ with $|W|=d$ such that $W$ dominates exactly one vertex $v$, then $T^*$ is not $(d+1)$-dominating.
In particular, if $T$ is critical $d$-dominating, then $T^*$ is not $(d+1)$-dominating.
\end{lemma}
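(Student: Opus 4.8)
The plan is to exhibit directly a set $S\subseteq V(T^*)=V(T)$ with $1\le |S|\le d+1$ that has no common out-neighbor in $T^*$; this is exactly what it means for $T^*$ to fail to be $(d+1)$-dominating. Write $W=\{w_1,\dots,w_d\}$ and let $v$ be the unique vertex of $T$ dominated by $W$. Since $T$ is a tournament it has no loops, so from $(w_i,v)\in E(T)$ for all $i$ we get $v\notin W$; moreover every $w_i$ lies in $N^-(v)$, and hence $N^+(v)\cap W=\emptyset$ (here $N^\pm$ are taken in $T$).

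The first step is to pin down the set $C$ of all common out-neighbors of $W$ in $T^*$, and the key point is that $C$ is tiny: $|C|\le 2$. For a vertex $y\notin W$, the loops of $T^*$ are irrelevant to whether every $w_i$ has an edge to $y$, so the common out-neighbors of $W$ in $T^*$ that lie outside $W$ are exactly those in $T$, namely just $v$. For a vertex $w\in W$, on the other hand, the loop at $w$ already supplies the edge $(w,w)$, so $w\in C$ precisely when $(w',w)\in E(T)$ for every $w'\in W\setminus\{w\}$, i.e.\ when $w$ is dominated inside $T$ by all the other vertices of $W$. A tournament on the vertex set $W$ has at most one vertex dominated by all the others (two such would be joined in both directions), so either $C=\{v\}$ or $C=\{v,a\}$ for a single $a\in W$, with $a\ne v$ since $a\in W$ while $v\notin W$.

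Next I would use $2$-domination of $T$ to find one vertex $x$ that ``defeats'' every member of $C$. If $C=\{v\}$, let $x$ be any out-neighbor of $v$ in $T$, which exists since $T$ is $2$-dominating (in particular every singleton has an out-neighbor in $T$). If $C=\{v,a\}$, let $x$ be a common out-neighbor of the pair $\{v,a\}$ in $T$, which exists by $2$-domination. In both cases $x\in N^+(v)$, so $x\ne v$ and $x\notin W$; set $S:=W\cup\{x\}$, which has exactly $d+1$ vertices. Now any common out-neighbor of $S$ in $T^*$ is in particular a common out-neighbor of $W$, hence lies in $C$; but $x$ has no edge in $T^*$ to any element of $C$ --- not to $v$, since $v\to x$ in $T$ and $x\ne v$, and (when $C=\{v,a\}$) not to $a$, since $a\to x$ in $T$ and $x\ne a$. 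So $S$ has no common out-neighbor in $T^*$, proving $T^*$ is not $(d+1)$-dominating. For the final sentence, if $T$ is critical $d$-dominating with $d\ge 2$ then $T$ is $2$-dominating, and for any $x\in V(T)$ the fact that the proper subtournament $T-x$ is not $d$-dominating yields a set $W$ with $1\le |W|\le d$ and no common out-neighbor in $T-x$; since $T$ is $d$-dominating, $W$ does have a common out-neighbor in $T$, which must be $x$ and (as any other would survive in $T-x$) must be the only one. Thus $W$ dominates exactly one vertex of $T$, and the argument above applies verbatim --- it only used $|W|\le d$, so that $|S|=|W|+1\le d+1$.

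I expect the only genuinely delicate point to be the bound $|C|\le 2$: one must notice that passing from $T$ to $T^*$ can create new common out-neighbors of $W$ among the vertices of $W$ themselves (through their loops), and then use the tournament structure on $W$ to limit these to a single extra vertex. Once that is settled, the role of $2$-domination is precisely to kill the (at most two) vertices of $C$ with a single vertex $x$, and the rest is routine bookkeeping with loops and tournament edges.
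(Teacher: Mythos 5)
Your proof is correct and is essentially the paper's argument run forwards rather than by contradiction: both identify that the common out-neighbors of $W$ in $T^*$ are $v$ together with at most one vertex of $W$ (your $a$, the paper's $w_1$), and both apply $2$-domination to that pair to produce the extending vertex $x$ with $W\cup\{x\}$ dominating nothing. Your explicit bound $|C|\le 2$, resting on the observation that the loops of $T^*$ can create at most one new common out-neighbor of $W$ inside $W$ itself, is exactly the content of the paper's ``it must be the case that $x\in W$; without loss of generality, suppose $x=w_1$'' step.
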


\begin{proof}
Let $W=\{w_1, \dots, w_d\}$ and $v$ be as in the statement.  To see that $T^*$ is not $(d+1)$-dominating, it is enough to prove that for some $u\in N^+(v)$ the set $W\cup\{u\}$ does not dominate any vertex in $T^*$ (note that since $T$ is 2-dominating, $N^+(v)\neq \emptyset$).  Suppose for contradiction that this is not the case; that is, for all $u\in N^+(v)$ the set $W\cup\{u\}$ dominates some vertex $x$ in $T^*$.  Note that by the definition of $W$ and the fact that $u\in N^+(v)$, it must be the case that $x\in W$; without loss of generality, suppose $x=w_1$.  This implies that for all $i\in [d]$, $(w_i,w_1)\in E(T)$.  But now this implies that for all $u\in N^+(v)$, $W\cup \{u\}$ dominates $w_1$.  On the other hand since $T$ is 2-dominating, it must be the case that there exists a vertex which is dominated by $\{w_1, v\}$ in $T$, but every outneighbor of $v$ is an inneighbor of $w_1$ and thus we have a contradiction.

To get the second part of the lemma, first note that if $T$ is critical $d$-dominating, then $T$ is 2-dominating.  Moreover, for all $v\in V$, since $T-v$ is not $d$-dominating there exists $W=\{w_1,\dots w_d\}\subseteq V(T)\setminus \{v\}$ which does not dominate any vertex in $V(T)\setminus \{v\}$, but since $T$ is $d$-dominating, $W$ must dominate $v$.
\end{proof}

If $G=(V,E)$ is a digraph
such that there exists $w\in V$ such
that $(v,w)\in E$ for all $v\in V$ (including $v=w$), then note that $G$ is $d$-dominating for all $d\le |V|$.  In this case we call $G$ {\em trivially $d$-dominating}.

\begin{lemma}\label{tlemma} Let $T$ be a tournament.  If $T$ is critical $d$-dominating,
%\item  $T^*$ is not $(d+1)$-dominating, and
%\item \label{p2} $T$ is $d$-dominated,
then $T^*$ cannot be covered by less than $d+1$
$(d+1)$-dominating subgraphs.
\end{lemma}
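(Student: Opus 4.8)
The plan is to argue by contradiction, combining the structural fact from Lemma \ref{critpara2} with a counting/pigeonhole argument on how the vertices of the covering subgraphs can be distributed. Suppose $T$ is critical $d$-dominating but that $T^*$ is covered by subgraphs $H_1, \dots, H_t$ with $t \le d$, each $(d+1)$-dominating. By Lemma \ref{critpara2}, $T^*$ itself is not $(d+1)$-dominating, so no single $H_i$ can equal $T^*$; in particular each $H_i$ is a proper subgraph in either vertices or edges. The first step is to observe that if some $H_i$ is trivially $(d+1)$-dominating, say witnessed by a vertex $w$ with all edges into $w$ present in $H_i$, then $H_i$ cannot contain all of $V(T)$ (else $H_i \supseteq T^*$ would make $T^*$ trivially $(d+1)$-dominating, contradicting Lemma \ref{critpara2} — here using that every out-neighbor of any fixed vertex would have to point to $w$, violating 2-domination exactly as in the proof of Lemma \ref{critpara2}). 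So no $H_i$ spans $V(T)$.

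Next I would set up the main combinatorial obstruction. For each $v \in V(T)$, criticality says $T - v$ is not $d$-dominating, so there is a set $W_v \subseteq V(T) \setminus \{v\}$ with $|W_v| = d$ that dominates no vertex of $V(T) \setminus \{v\}$; since $T$ is $d$-dominating, $W_v$ dominates $v$ (and $v$ only). Now consider a covering subgraph $H_i$ and a vertex $v$ with $v \notin V(H_i)$ — such a $v$ exists for each $i$ since no $H_i$ spans. I want to show that $W_v \not\subseteq V(H_i)$, or more precisely that $H_i$ restricted to $W_v \cup \{v\}$ cannot witness $(d+1)$-domination of the relevant set, exploiting that $W_v$ has a unique target $v$ in $T$ and that $v \notin V(H_i)$. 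The key point: inside $H_i$, the set $W_v$ (if fully present) together with any further vertex $u \in V(H_i) \setminus W_v$ forms a set of size $\le d+1$ that, in $T^*$, dominates only vertices in $W_v \cup \{u\}$ (it cannot dominate $v \notin V(H_i)$, and it cannot dominate anything in $V \setminus (W_v \cup \{v\} \cup \{u\})$ by the defining property of $W_v$), which forces $W_v \cup \{u\}$ to dominate some $w \in W_v$ — and then running the argument of Lemma \ref{critpara2} verbatim produces a contradiction with $T$ being 2-dominating.

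To finish, I need that for \emph{every} $i \in [t]$ there is a vertex $v_i \notin V(H_i)$ such that the set $W_{v_i}$ either fails to lie in $V(H_i)$ or produces the contradiction above; since the above shows $W_{v_i} \subseteq V(H_i)$ is impossible, we get $W_{v_i} \not\subseteq V(H_i)$ for each $i$. Thus each $H_i$ omits at least one vertex of the corresponding $d$-set $W_{v_i}$. The final step is a pigeonhole argument: I would pick the vertices $v_1, \dots, v_t$ cleverly (for instance, choosing $v_i \in \bigcap_{j \ne i} V(H_j)$, which is possible because the $H_j$ cover $V$ and $t \le d$ is small — if $\bigcap_{j\ne i} V(H_j) = \emptyset$ then the other $t-1 \le d-1$ subgraphs already fail to cover, contradiction... actually this needs care) and derive that the $t \le d$ subgraphs together cannot cover all of $V(T)$, the desired contradiction. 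The main obstacle I anticipate is exactly this last bookkeeping: arranging the witnesses $W_{v_i}$ and the omitted vertices so the pigeonhole cleanly forces an uncovered vertex; it may be cleaner to induct on $t$ or to argue that a minimal-size cover cannot have $t \le d$ by deleting one subgraph and using criticality on the vertices it uniquely covers. I would present the Lemma \ref{critpara2}-style local contradiction as the reusable core and spend the remaining effort making the global counting airtight.
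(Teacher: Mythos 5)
Your local argument does not yield the contradiction you claim, and this is the crux of the gap. Suppose $v\notin V(H_i)$ and $W_v\subseteq V(H_i)$. Running the argument of Lemma \ref{critpara2} inside $H_i$ gives you: some $w_1\in W_v$ receives an edge from every other vertex of $W_v$, and then every $u\in V(H_i)\setminus W_v$ must also send an edge to $w_1$. But the contradiction in Lemma \ref{critpara2} came from the fact that \emph{every} out-neighbor of $v$ in the whole tournament $T$ points to $w_1$, so that $\{w_1,v\}$ dominates nothing, violating $2$-domination. Here you only control the vertices of $V(H_i)$; the out-neighbors of $v$ lying outside $V(H_i)$ are unconstrained, and $\{w_1,v\}$ may perfectly well dominate one of them. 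So you cannot conclude $W_v\not\subseteq V(H_i)$. What your computation actually proves is that $H_i$ has a ``sink'' $w_1$ dominated by all of $V(H_i)$ --- i.e.\ $H_i$ is trivially $(d+1)$-dominating --- which is not a contradiction at all. On top of this, your final pigeonhole step is, as you concede, not actually carried out, and it is built on the false premise that each $H_i$ omits a vertex of its witness set.

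The fix is to embrace the conclusion you were trying to refute. For each $i$, take the witness $W$ of non-$d$-domination \emph{inside} $V(H_i)$ (the subtournament $T[V(H_i)]$ is proper, hence not $d$-dominating by criticality; the case $|V(H_i)|\le d$ is handled separately and is immediate). Your argument then shows each $H_i$ has a vertex $v_i$ dominated by all of $V(H_i)$. Now finish globally: every vertex of $T$ lies in some $V(H_i)$ and hence beats $v_i$, so the set $\{v_1,\dots,v_t\}$ with $t\le d$ dominates no vertex of $T$, contradicting that $T$ is $d$-dominating. This is exactly the paper's proof; your ``reusable core'' is its Claim, but you misread its output as a contradiction and therefore never reach the (short, clean) endgame.
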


\begin{proof} Suppose for contradiction that for some $t\leq d$ there are $(d+1)$-dominating
subgraphs $H_1,\dots,H_t$ which cover $T^*$.  Since $T$ is critical $d$-dominating we have by Lemma \ref{critpara2} that $T^*$ is not $(d+1)$-dominating, and thus all
$V(H_i)$ are proper subsets of $V(T^*)$.

\begin{claim}\label{cl_triv} Each $H_i$ is trivially $(d+1)$-dominating.
\end{claim}
\begin{proof} The claim is obvious if
$|V(H_i)|\leq d$; so suppose that $|V(H_i)|\ge d+1$. Since $T$ is critical $d$-dominating, the
subtournament $T_i$ of $T$ spanned by $V(H_i)$ is not
$d$-dominating. This is witnessed by a set $W=\{w_1,\dots,w_d\}\subseteq
V(T_i)$  such that $W$ does not dominate any vertex in
$U=V(T_i)\setminus W$.  Let $u\in U$.  Since $H_i$ is
$(d+1)$-dominating, $W\cup u$ dominates some vertex $x\in V(H_i)$
which must be in $W$ from the definition of $W$.  Without loss of generality, let
$x=w_1$. This implies that $(u,x_1)\in E(T)$ and for all $i\in [d]$, $(w_i,w_1)\in E(T)$.  But now this implies that for all $u\in U$, $W\cup \{u\}$ dominates $w_1$ and thus all vertices of
$V(H_i)$ (including $w_1$) are oriented to $w_1$ proving the claim.
\end{proof}

Claim \ref{cl_triv} implies that for all $i\in [t]$ there is a
vertex $v_i\in V(H_i)$ which is dominated by all vertices of $H_i$.
But since $\cup_{i=1}^t V(H_i)=V(T)$, the set $\{v_1,\dots,v_t\}$
does not dominate any vertex in $T$, contradicting the fact that $T$
is $d$-dominating.
\end{proof}

\begin{bproof}[Proof of Theorem \ref{critpara}]
First note that $f(1,d+1)\le d+1$ by Lemma \ref{L1}.

Let $T_B$ be a tournament on vertex set $V$ such that $T_B$ is critical $d$-dominated and has no $(d+1)$-dominating subtournaments.  Define the $2$-colored complete digraph $K$ on $V$ by coloring all edges of $T_B$ blue, and all edges of $(V\times V)\setminus E(T_B)$ red.  Let $T_R$ be the tournament with $E(T_R)=\{(y,x):(x,y)\in E(T_B)\}$ and note that every edge of $T_R$ is red and $T_R$ has no loops.  Since $T_B$ is critical $d$-dominated, this implies that $T_R$ is critical $d$-dominating (since $T_R$ is obtained by reversing all the edges of $T_B$).

Note that by the assumption on $T_B$, every monochromatic $(d+1)$-dominating subgraph in $K$ must be red.  However, since $T_R$ is crtical $d$-dominating, we get that $f(1,d+1)\ge d+1$ from Lemma \ref{tlemma}.
\end{bproof}

\paragraph{Acknowledgements.}

We thank Boris Bukh for Example \ref{pparadox} and for his comments on the paper.

\end{document}